\documentclass[a4paper]{article}

\title{Hardy-Carleman Type Inequalities for Dirac Operators}
\author{
Alexandra Enblom\\
{\small Department of Mathematics}\\
{\small Linköping University}\\
{\small SE-581 83 Link\"oping, Sweden}\\
{\small \texttt{alexandra.enblom@liu.se}}\\
}

\date{}

\usepackage[utf8]{inputenc}
\usepackage[T1]{fontenc}
\usepackage{amsmath}
\usepackage{amsthm}
\usepackage{amssymb} 
\usepackage{mathrsfs} 
\usepackage{enumerate}

\theoremstyle{plain}
\newtheorem{thm}{Theorem}[section]

\newtheorem{cor}[thm]{Corollary}

\theoremstyle{definition}

\newtheorem{rem}[thm]{Remark}


\newcommand{\reals}{\ensuremath{\mathbb{R}}}
\newcommand{\complex}{\ensuremath{\mathbb{C}}}



\newcommand{\ud}{\,d}

\setlength{\parindent}{0pt}
\setlength{\parskip}{1ex plus 0.5ex minus 0.2ex}
\numberwithin{equation}{section}

\begin{document}
\maketitle

\begin{abstract}
General Hardy-Carleman type inequalities for Dirac operators are prov\-ed. New inequalities are derived involving particular traditionally used weight functions. In particular, a version of the Agmon inequality and Treve type inequalities are established. The case of a Dirac particle in a (potential) magnetic field is also considered. The methods used are direct and based on quadratic form techniques.
\end{abstract}

\textbf{Keywords}: Spectral theory; Dirac operators; Weighted inequalities.

\section{Introduction}\label{sec:introduction}

In a recent work \cite{dolbeault-esteban-sere} (see also \cite{dolbeault-esteban-duoandikoetxea} and \cite{dolbeault-esteban-loss-vega}) of J. Dolbeault et al. it was proved a version of Hardy type inequality related to the Dirac operator describing the behaviour of a spin $1/2$ particle with non-zero  rest mass under the influence of an electrostatic potential $q$. Namely, for a given scalar potential $q$ satisfying 
$$q (x) \to 0, \quad | x | \to \infty,$$
$$- \frac{\nu}{| x |} - c_{1} \leq q (x) \leq   c_{2}  =  \sup_{x \in \reals^{3}}  q (x)$$
with (a parameter)  $\nu \in (0,1)$   and $c_{1}, c_{2} \in \reals,$ there exists a value $\mu$ in the interval $(- 1, 1)$  such  that the inequality 
\begin{equation}\label{eq:first formula}
\int_{\reals^{3}} q  \left| \varphi \right|^{2} \ud x \leq  \int_{\reals^{3}} \left( \frac{ | \sigma \cdot \nabla \varphi |^{2}}{1 + \mu + q}  + \left( 1 - \mu\right) | \varphi |^{2} \right) \ud x
\end{equation}
holds true for all functions $\varphi$ in the  Sobolev space  $W_{2}^{1} (\reals^{3}; \complex^{2}).$    In \eqref{eq:first formula} $\nabla \varphi$ denotes the distributional gradient of $\varphi,  \sigma = (\sigma_{1}, \sigma_{2}, \sigma_{3})$ being the triplet of Pauli matrices
$$\sigma_{1} = 
\left( \begin{array}{cc} 
 0 & 1 \\ 
1 & 0  \end{array} \right), 
\sigma_{2} = 
\left( \begin{array}{cc} 
 0 & - i \\ 
i & 0  \end{array} \right),
\sigma_{3} = 
\left( \begin{array}{cc} 
 1 & 0 \\ 
0 & - 1  \end{array} \right).$$

$\mu$ is in fact taken as the smallest eigenvalue of the Dirac operator
$$H = - i \alpha \cdot \nabla + \beta + q,$$
where $\alpha = (\alpha_{1}, \alpha_{2}, \alpha_{3})$   and the matrices  $\beta, \alpha_{j} \in M_{4 \times 4} (\complex),  j = 1, 2, 3,$   are defined as 
$$\sigma_{1} = 
\left( \begin{array}{cc} 
 0 & \sigma_{j} \\ 
\sigma_{j} & 0  \end{array} \right),
\beta =  
\left( \begin{array}{cc} 
 I d & 0 \\ 
0 & - I d  \end{array} \right).$$
($I d$ is the identity matrix in $\complex^{2}$). As a consequence, in the case of the Dirac-Coulomb Hamiltonian $H$, that is, when $q (x) \approx 1/| x |,$  a simple limiting argument yields to the following Hardy type inequality
\begin{equation}\label{eq:second formula}
\int_{\reals^{3}} \frac{| \varphi |^{2}}{| x |} \ud x  \leq  \int_{\reals^{3}}  \left(  \frac{| \sigma \cdot \nabla \varphi |^{2}}{ 1 + \frac{1}{| x |}}  + | \varphi |^{2} \right) \ud x, \quad  \varphi \in W_{2}^{1} (\reals^{3}; \complex^{2}),
\end{equation}
which can be interpreted also as  a {\em relativistic uncertainly principle} for $H$.

The mentioned inequalities \eqref{eq:first formula} and \eqref{eq:second formula} allow in a natural way to describe distinguished self-adjoint extensions of Dirac  operators with certain singularities of  the potentials \cite{esteban-loss}. It should be mentioned that they are also useful  in the study of spectral properties of Dirac operators  especially relevant to the problems of scattering theory as, in particular, to get information about the behaviour of the resolvent nearly to the continuous spectrum, in proving of the limiting absorption principle and others.

Our main purposes is to prove Hardy type inequalities for Dirac  operators in more general setting involving arbitrary weight functions. By a Dirac operator we mean a first order partial differential operator with constant coefficients of the form 
\begin{equation}\label{eq:dirac}
H = \sum_{j = 1}^{n} \alpha_{j} D_{j}  + \beta,
\end{equation}
where $D_{j}  = - i   \partial / \partial x_{j} \ \ (j = 1, ..., n), \ x =(x_{1}, ..., x_{n}) \in  \reals^{n}, $   $\alpha_{j}  \  (j = 1, ...,n)$  and $\beta$  are $m \times m$ Hermitian matrices which satisfy the Clifford's anticommutation relations
\begin{equation}\label{eq:clifford}
\alpha_{j}  \alpha_{k}  + \alpha_{k} \alpha_{j}   =  2 \delta_{j k} \ (j,k = 1, ..., n), \ \alpha_{j}  \beta   + \beta \alpha_{j}  = 0 \ \ (j = 1, ..., n), \ \beta^{2} = 1,
\end{equation}
$m = 2^{n/2}$  for $n$ even and $m = 2^{(n + 1)/2}$  for $n$ odd; $\delta_{jk}$ denotes the Kronecker symbol $(\delta_{j k} = 1 $\ \ if  \ \ $j = k$ and $\delta_{j k}  = 0 $  if $j \neq k$).  
The Dirac operator $H$ is usually considered acting in the space $L_{2}
(\reals^{n}; \complex^{m})$ defined on its maximal domain the Sobolev space
$W_{2}^{1}  (\reals^{n}; \complex^{m})$. However, to cover some
more general situations, also important  in applications or by themselves,
it will be also assumed that the operator $H$ is defined on an arbitrary
(open) domain $\Omega$ in $\reals^{n}$.   In those cases  as  a domain of
$H$ is certainly taken the Sobolev space of functions defined on $\Omega$.

Our aim is to describe conditions on the weight functions $a, b$ under which an  inequality  of the form 
\begin{equation}\label{eq:weight}
c   \| a u \| \leq  \| b H u \|, \quad u \in \mathcal{D},
\end{equation}
holds true for a suitable class of  functions $u \in \mathcal{D}, c$ being a positive  constant depending only on $a, b$ and, eventually, $\Omega$. We assume that  $a$ and $b$ are sufficiently smooth functions, it will be enough to consider $a, b$ to be of class $C^{2}$. In \eqref{eq:weight} $\| \cdot \|$ designates the  norm on $L_{2}
(\reals^{n}; \complex^{m})$. We call estimates like that in \eqref{eq:weight} as Hardy-Carleman type inequalities.  Inequalities of this kind related mostly to the Laplace operator are also named weighted  Hardy inequalities or weighted Poincar$\acute{e}$-Sobolev  inequalities or  weighted Friedrichs inequalities  as well. We emanate the classical Hardy  inequality for Dirichlet form (see, for instance, \cite{mazja} or \cite{davies99} and \cite{kufner-maligranda-persson} for a history on the subject and farther references),  and the remarkable inequality due to Carleman stated  in \cite{carleman} in connection with the unique continuation property for second order elliptic differential equations. Apart  from the already mentioned works (there is  a vast literature on the topic), we refer to \cite{amrein} (see also \cite{amrein2}), \cite{hormander1}, \cite{hormander2}, \cite{jerison}, \cite{jerison-kenig}, and the references quoted there. 

In spite of the fact that the Laplace and Dirac operators are closely connected with each other, however, they behave quite  differently, and the methods properly for the Laplace operator are no  longer applicable to  the Dirac operators case. Nevertheless, as is  shown in this paper, some of traditional  methods can be refined to be available  also for Dirac operators. Compared to the Laplace operator case, the subject concerning  Dirac operators, to the best of our knowledge,  has beed studied rather sparingly in the literature. We mention the work \cite{jerison} in which certain Carleman inequalities for the (massless) Dirac operator are established. 

The paper is organised as follows. In Section \ref{sec:general HC} we discuss general weight inequalities for the  Dirac operator defined by \eqref{eq:dirac} and \eqref{eq:clifford}. Conditions  on the weight functions $a, b$ are given in order that an inequality of the form \eqref{eq:weight} hold true. The proofs are based on quadratic form techniques. In Section \ref{sec:The case of radial weight functions} there is established a  Carleman inequality for the particular case, but useful in applications, of radial weight functions. Section \ref{sec:Example of Hardy-Carleman inequalities} contains concrete Carleman  type  inequalities  that are derived  from general results by handling special  traditionally used weight functions. In this  way a version of the Agmon inequality and Treve  type  inequalities are obtained as  particular cases of  the general inequalities. In Section \ref{sec:A Carleman type inequality (another approach)}  we prove an inequality with power like  weight functions for which the approach applied  previously is not available. The arguments in the proof of the corresponding Hardy-Carleman  inequality use eigenfunctions expansions by  involving spherical harmonic functions.  Finally,  in Section \ref{sec:Inequalities for the Dirac operator with a magnetic field} the results are extended  to the case of the Dirac operator describing  a relativistic particle  in a potential  magnetic field.

\section{General Hardy-Carleman inequalities}\label{sec:general HC}

In this section we discus general weight inequalities of the form \eqref{eq:weight}. In order to make use our method for the  proof it is always required that the  weight functions $a, b$ to be of class $C^{2}$. We describe  conditions under which \eqref{eq:weight} hold true for a suitable class of functions $u \in \mathcal{D}$.

It will be convenient to work in polar coordinates  $(r, \omega) \in (0, \infty) \times S^{n -1}$ :   $r  = | x |$,  $\omega  = x / | x | $ for $x \neq 0$.
Denoting $\omega_{j} = x_{j} / | x |$ $(j = 1, ..., n)$,  the coordinates
of $\omega$, we have
$$\frac{\partial}{\partial x_{j}}  =  \omega_{j}   \frac{\partial}{\partial r} + r^{- 1}   \Omega_{j} \quad (j = 1, ..., n),$$
where  $\Omega_{j}$   is a vector-field  on the unit sphere $S^{n - 1}$
satisfying
$$\sum_{j = 1}^{n} \omega_{j} \Omega_{j}  = 0, \quad \sum_{j = 1}^{n}  \Omega_{j} \omega_{j} = r \sum_{j = 1}^{n}
 \frac{\partial \omega_{j}}{\partial x_{j}}  = n - 1.$$
 Let
 $$\widehat{\alpha}  =  \sum_{j = 1}^{n} \alpha_{j} \omega_{j},$$
 then
$$H = \sum_{j = 1}^{n} \alpha_{j}  \left(- i  \left( \omega_{j} \frac{\partial}{\partial r} + r^{- 1} \Omega_{j} \right)\right) + \beta
=$$
$$=  - i  \sum_{j = 1}^{n} \alpha_{j} \omega_{j}  \frac{\partial}{\partial r} - i  r^{- 1} \sum_{j = 1}^{n}
\alpha_{j} \Omega_{j}   + \beta  = $$
$$=  - i  \widehat{\alpha} \frac{\partial}{\partial r} - i r^{- 1} \sum_{j = 1}^{n} \alpha_{j} \Omega_{j}  + \beta,$$ 
i.e.,
\begin{equation}\label{eq:H}
 H  =  - i \widehat{\alpha} \frac{\partial}{\partial r} - i r^{- 1} \sum_{j = 1}^{n} \alpha_{j} \Omega_{j}  + \beta.
\end{equation}
 It is  easy to see that
$$\widehat{\alpha}^{2} =  \left( \sum_{j = 1}^{n} \alpha_{j} \omega_{j}  \right)^{2} = \sum_{j = 1}^{n} \sum_{k = 1}^{n} \alpha_{j} \alpha_{k} \omega_{j} \omega_{k}  = 1,$$
 and
$$\widehat{\alpha} \sum^{n}_{j = 1} \alpha_{j} \Omega_{j} = \sum_{j = 1}^{n} \sum_{k = 1}^{n} \alpha_{j} \alpha_{k}   \omega_{j}  \Omega_{k}  = $$
$$= \sum_{j = 1}^{n}  \alpha_{j}^{2}  \omega_{j} \Omega_{j} + \sum_{j \neq k} \alpha_{j}  \alpha_{k} \omega_{j} \Omega_{k} = \sum_{j = 1}^{n} \omega_{j} \Omega_{j} + \sum_{j < k} \alpha_{j} \alpha_{k} \omega_{j} \Omega_{k} + \sum_{j > k} \alpha_{j} \alpha_{k} \omega_{j} \Omega_{k} =$$
$$= \sum_{j < k} \alpha_{j} \alpha_{k} \omega_{j} \Omega_{k} + \sum_{j < k} (- \alpha_{j} \alpha_{k} )
\omega_{k} \Omega_{j} =$$
$$=  \sum_{j < k} \alpha_{j}  \alpha_{k} (\omega_{j} \Omega_{k} - \omega_{k} \Omega_{j} ).$$

We let
$$L  =   \sum_{j < k} \alpha_{j} \alpha_{k} (\omega_{j}  \Omega_{k}  - \omega_{k}  \Omega_{j}),$$
 an operator acting only in the $\omega$ variables, then 
$$i  \widehat{\alpha}  H  =  i  \widehat{\alpha} \left( - i  \widehat{\alpha}  \frac{\partial}{\partial r} - i r^{- 1} \sum_{j = 1}^{n} \alpha_{j} \Omega_{j} + \beta\right) =$$
$$= \frac{\partial}{\partial r} + r^{- 1} L + i  \widehat{\alpha}  \beta,$$
i.e.,
\begin{equation}\label{eq:iH}
i  \widehat{\alpha} H = \frac{\partial}{\partial r} + r^{- 1} L + i \widehat{\alpha} \beta.
\end{equation}

Note that
$$L =  \sum_{j < k} \alpha_{j}  \alpha_{k} \left(x_{j} \frac{\partial}{\partial  x_{k}} - x_{k} \frac{\partial}{\partial  x_{j}}\right),$$
that follows immediately from the relations
$$ x_{j} \frac{\partial}{\partial x_{k}} - x_{k} \frac{\partial}{\partial x_{j}}  =  x_{j} \left(\omega_{k}   \frac{\partial}{\partial r} + r^{- 1} \Omega_{k}\right)
- x_{k} \left(\omega_{j}   \frac{\partial}{\partial r}   + r^{- 1} \Omega_{j}\right)  =$$
$$= r \omega_{j} \omega_{k} \frac{\partial}{\partial r} + \omega_{j} \Omega_{k} - r \omega_{k} \omega_{j}  \frac{\partial}{\partial r} - \omega_{k} \Omega_{j} = $$
$$= \omega_{j}  \Omega_{k}  -  \omega_{k}  \Omega_{j}.$$

Returning to the inequality \eqref{eq:weight} we put
$$v = b u$$
and observe that for any smooth function $\varphi$  one has
\begin{equation}\label{eq:commutator}
[ H,  \varphi ]  =   - i   \sum_{j = 1}^{n}  \alpha_{j}   \frac{\partial \varphi}{\partial x_{j}}, 
\end{equation}
where $[H, \varphi ]$ denotes the commutator of $H$ and $\varphi,$ viewing simultaneously
$\varphi$ as the multiplication operator by the function $\varphi.$

Using the obtained relation \eqref{eq:commutator} we can write
$$ b H = H b +  i \sum_{j = 1}^{n} \alpha_{j} \frac{\partial b}{\partial x_{j}}$$
or
$$b H b^{- 1}  = H + i B,$$
where
\begin{equation}\label{eq:B}
B = b^{-1} \sum_{j = 1}^{n}  \alpha_{j}   \frac{\partial b}{\partial x_{j}}.
\end{equation}

Thus, the inequality \eqref{eq:weight} reduces to the estimation  from the below of
the  quadratic form
$$h [v] =  \|  (H + i B) v \|^{2}$$
on suitable elements $v$.

Let us start with the case of the Dirac operator considered on $\reals^{n}$. Using the polar coordinates, by \eqref{eq:H} and \eqref{eq:iH}, $h[v]$ can be represented as follows
$$h [v] = \int_{0}^{\infty} \int_{S^{n - 1}} r^{n - 1} \left| \left(- i \widehat{\alpha}
  \frac{\partial}{\partial r} - i r^{- 1} \sum_{j = 1}^{n} \alpha_{j} \Omega_{j} + \beta + i B (r \omega) \right) v \left(r \omega\right) \right|^{2} \ud r \ud \omega =$$
$$=  \int_{0}^{\infty}  \int_{S^{n - 1}}   r^{n - 1}  \left| i \widehat{\alpha}  \left(- i \widehat{\alpha}
  \frac{\partial}{\partial r} - i r^{- 1} \sum_{j = 1}^{n} \alpha_{j} \Omega_{j} + \beta + i B \left(r \omega\right) \right) v \left(r \omega\right) \right|^{2} \ud r \ud \omega =$$
$$=  \int_{0}^{\infty} \int_{S^{n - 1}} r^{n - 1} \left| \left(\frac{\partial}{\partial r} + r^{- 1} L + i \widehat{\alpha}  \beta - \widehat{\alpha} B \left(r \omega\right) \right) v \left(r \omega\right) \right|^{2} \ud r \ud \omega.$$
Letting $r = e^{t}$  we have
$$\frac{\partial}{\partial r} = e^{- t}  \frac{\partial}{\partial t},$$
and then
$$h [v] =  \int_{- \infty}^{\infty} \int_{S^{n - 1}} e^{t (n - 2)} \left| \left(\frac{\partial}{\partial
t} + L + i \widehat{\alpha} \beta e^{t} - \widehat{\alpha} B \left(e^{t} \omega\right) e^{t} \right) v \left(e^{t} \omega\right) \right|^{2} \ud t \ud \omega.$$
In order to remove the exponent in the expression on right-hand side, we
let
$$\widetilde{v} (t, w) = e^{\nu t} v \left(e^{t} w\right) \quad with \quad  2 \nu = n - 2$$
and, taking into account that
$$\left(\frac{\partial}{\partial t}\right) e^{- \nu t} = - \nu e^{- \nu t} + e^{- \nu t} \frac{\partial}{\partial t} = e^{- \nu t} \left(\frac{\partial}{\partial t} - \nu\right),$$
we obtain
$$h [v] =  \int_{- \infty}^{\infty} \int_{S^{n - 1}} \left| \left(\frac{\partial}{\partial t} - \frac{n - 2}{2} + L + i \widehat{\alpha} \beta e^{t} - \widehat{\alpha} B \left(e^{t} \omega\right) e^{t} \right) \widetilde{v} \left(t, \omega\right) \right|^{2} \ud t
\ud \omega.$$

It follows
$$h [v] =  \int_{- \infty}^{\infty} \int_{S^{n - 1}} \left| \frac{\partial}{\partial t} \widetilde{v} \left(t, \omega\right) \right|^{2} \ud t \ud \omega   + $$
 $$ +    \int_{- \infty}^{\infty} \int_{S^{n - 1}} \left| \left( L  + i \widehat{\alpha}  \beta  e^{t} - \widehat{\alpha} B
\left(e^{t} \omega\right) e^{t}  - \frac{(n - 2)}{2}   \right) \widetilde{v} \left(t, \omega\right) \right|^{2} \ud t \ud \omega +$$
$$ +  \int_{- \infty}^{\infty} \int_{S^{n - 1}}  \left\langle \left( -  i \widehat{\alpha}  \beta  e^{t} + \widehat{\alpha}  \frac{\partial \left( B \left(e^{t} \omega\right) e^{t} \right)}{\partial t} \right) \widetilde{v}  \left(t, \omega\right),  \widetilde{v}  \left(t, \omega\right) \right\rangle \ud t \ud \omega$$
($\langle \cdot, \cdot \rangle$   denotes the scalar product in $\complex^{m}$). 
	
In  this way we obtain the following estimation
\begin{equation}\label{eq:scalar}
 h [v]  \geq  \int_{- \infty}^{\infty}  \int_{S^{n - 1}} \langle G(t, \omega) \widetilde{v} (t, \omega),  \widetilde{v} (t, \omega) \rangle \ud t \ud \omega,  
\end{equation}
where
$$ G (t, \omega) =  -i  \widehat{\alpha}  \beta  e^{t} + \widehat{\alpha} \frac{\partial \left(B\left( e^{t} \omega\right) e^{t} \right)}{\partial t}.$$ 

The form on the right-hand side of \eqref{eq:scalar} can be further transformed as
follows  (note that $\partial / \partial t  = r \partial / \partial r $)
$$\int_{- \infty}^{\infty}  \int_{S^{n - 1}} \langle G (t, \omega) \widetilde{v} (t, \omega), \widetilde{v} (t, \omega)
\rangle \ud t \ud \omega  = $$
$$=  \int_{- \infty}^{\infty}  \int_{S^{n - 1}} e^{2 \nu t} \langle G (t, \omega) v (e^{t}  \omega), v (e^{t}  \omega)
\rangle \ud t \ud \omega  =$$
$$=  \int_{0}^{\infty}  \int_{S^{n - 1}}   r^{n - 2}  \biggl\langle  \biggl(- i \widehat{\alpha} \beta r + \widehat{\alpha} r     \frac{\partial (B (r \omega) r )}{\partial  r} \biggl) v(r \omega), v (r \omega) \biggl\rangle r^{-1} \ud r \ud \omega  =$$
 $$=  \int_{0}^{\infty}  \int_{S^{n - 1}}   r^{n - 1}  \biggl\langle  r^{-1} \biggl(- i \widehat{\alpha}
 \beta  +  \widehat{\alpha} \frac{\partial (B(r \omega) r)}{\partial r} \biggl) b  (r
 \omega)^{2} u (r \omega), u (r \omega) \biggl\rangle \ud r \ud \omega.$$
   
 Now, wishing to involve the weight function $a$ we  require that the matrix-valued function
\begin{equation}\label{eq:M}
M (r, \omega) =  r^{-1}  \biggl(- i \widehat{\alpha} \beta + \widehat{\alpha} \frac{\partial (B(r \omega) r)}{\partial  r}\biggl) b (r
 \omega)^{2} a (r \omega)^{-2} 
\end{equation}
is positive definite (with respect to  the quadratic forms) uniformly with respect to $r$ and $\omega$, that in fact means
\begin{equation}\label{eq:M1}
 \langle  M (r, \omega)  u (r \omega),  u (r \omega)  \rangle \geq c | u (r \omega) |^{2} 
 \end{equation}
for a positive constant $c$ independent of $u, r$  and $\omega$.
Under this condition  there holds
$$h [ v ] \geq c \int_{0}^{\infty}  \int_{S^{n - 1}} r^{n - 1}  | a (r \omega) u (r \omega) |^{2} \ud r \ud \omega =
c \int_{\reals^{n}} | a(x) u(x) |^{2}  \ud x,$$
 that  leads  to the desired inequality \eqref{eq:weight}.    
For, as is seen, $\mathcal{D}$ can be taken the set of all  functions $u$ belonging to the Sobolev space $W_{2}^{1} (\reals^{n}; \complex^{m})$ having compact support in  $\reals^{n} \backslash  \{ 0 \}$.  

We summarize the above discussion in the following theorem.

\begin{thm}\label{thm:result1}
Let $a, b$ be positive functions of the  class $C^{2}$  for which  the condition \eqref{eq:M1}
is fulfilled. Then for any function $u$ belonging to the Sobolev space $W_{2}^{1} (\reals^{n}; \complex^{m})$
and having  a compact support in the set $\reals^{n} \backslash  \{ 0 \}$  the inequality \eqref{eq:weight} holds  true with a positive constant $c$ depending only on $a$ and $b$. 
\end{thm}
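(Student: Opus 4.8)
The plan is to observe that the theorem is essentially a bookkeeping summary of the computation carried out in the preceding pages, so the proof consists of assembling that chain of identities carefully and checking that each step is legitimate for the stated class of test functions. First I would fix $u \in W_2^1(\reals^n;\complex^m)$ with compact support in $\reals^n \setminus \{0\}$, set $v = bu$ (which is again such a function since $b \in C^2$ and $b > 0$ on the support of $u$), and recall from \eqref{eq:B} that $bHb^{-1} = H + iB$, so that $\|bHu\|^2 = \|(H+iB)v\|^2 =: h[v]$. The identity \eqref{eq:commutator} used here is just the Leibniz rule for the first-order operator $H$ applied to the multiplication operator $b$, valid in the distributional sense and hence on $W_2^1$.

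Next I would pass to polar coordinates and use \eqref{eq:H} together with the fact that $\widehat{\alpha}$ is unitary ($\widehat{\alpha}^2 = 1$, $\widehat{\alpha}$ Hermitian) to replace $H + iB$ inside the norm by $i\widehat{\alpha}(H+iB)$, which by \eqref{eq:iH} equals $\partial_r + r^{-1}L + i\widehat{\alpha}\beta - \widehat{\alpha}B$. The substitutions $r = e^t$ and $\widetilde v(t,\omega) = e^{\nu t} v(e^t\omega)$ with $2\nu = n-2$ are designed exactly to absorb the Jacobian $r^{n-1}\,dr = e^{(n-2)t}\,dt$ and the conjugation of $\partial_t$, yielding the clean expression for $h[v]$ displayed before \eqref{eq:scalar}. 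Expanding that square, the crucial point is that $\partial_t$ is formally skew-adjoint and $L$ is formally skew-adjoint (since each $\alpha_j\alpha_k$ with $j<k$ is skew-Hermitian and the vector fields $\omega_j\Omega_k - \omega_k\Omega_j$ are skew-adjoint tangential derivatives on $S^{n-1}$), while $i\widehat{\alpha}\beta$ is Hermitian; so the cross terms between $\partial_t + L$ and the Hermitian part collapse, by integration by parts in $t$, into the single boundary-free term $\langle G(t,\omega)\widetilde v, \widetilde v\rangle$ with $G$ as defined. Dropping the two manifestly nonnegative square terms gives the lower bound \eqref{eq:scalar}.

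Finally I would undo the change of variables in \eqref{eq:scalar}, rewriting the right-hand side back in the $r$ variable and in terms of $u = b^{-1}v$, which produces precisely $\int \langle M(r,\omega) u, u\rangle\, r^{n-1}\,dr\,d\omega$ with $M$ as in \eqref{eq:M}; invoking the hypothesis \eqref{eq:M1} bounds this below by $c\int |au|^2 = c\|au\|^2$, and combining with $\|bHu\|^2 = h[v] \ge \eqref{eq:scalar}$ yields \eqref{eq:weight}. The step I expect to require the most care — and which I would want to state explicitly rather than gloss over — is the integration by parts in $t$ (equivalently in $r$) generating the term $\widehat{\alpha}\,\partial_t(B(e^t\omega)e^t)$ in $G$: one must justify that the boundary terms at $t = \pm\infty$ vanish, which is where the compact support of $u$ away from the origin is used (it guarantees $\widetilde v$ has compact support in $t$), and one must check that $B$, hence $Be^t = b^{-1}\sum_j\alpha_j\partial_{x_j}b \cdot r$, is $C^1$ in $t$ on the support of $\widetilde v$ — this is exactly why $b \in C^2$ is assumed. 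The analogous skew-adjointness of $L$ on $L^2(S^{n-1};\complex^m)$ also deserves a one-line justification, as it is what kills the remaining cross terms.
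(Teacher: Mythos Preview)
Your plan is exactly the paper's argument: reduce to $h[v]=\|(H+iB)v\|^2$ via $v=bu$, pass to polar coordinates and multiply by $i\widehat{\alpha}$, substitute $r=e^t$ and $\widetilde v=e^{\nu t}v$, expand the square and drop the two nonnegative pieces to reach \eqref{eq:scalar}, then unwind and apply \eqref{eq:M1}. Your remarks on why compact support in $\reals^n\setminus\{0\}$ and $b\in C^2$ are needed (vanishing boundary terms in $t$, $C^1$-regularity of $B(e^t\omega)e^t$) are precisely the hidden hypotheses the paper is using.

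There is, however, one concrete slip in your cross-term analysis. The operator $L=\sum_{j<k}\alpha_j\alpha_k(\omega_j\Omega_k-\omega_k\Omega_j)$ is \emph{self}-adjoint, not skew-adjoint: each $\alpha_j\alpha_k$ ($j\neq k$) is skew-Hermitian and each rotation vector field is skew-adjoint on $L^2(S^{n-1})$, so their product is self-adjoint. Correspondingly, the paper does \emph{not} group $L$ with $\partial_t$; it splits the full operator as $\partial_t$ (skew-adjoint) plus
\[
A \;=\; L \;+\; i\widehat{\alpha}\beta e^{t} \;-\; \widehat{\alpha}B(e^t\omega)e^{t} \;-\; \tfrac{n-2}{2},
\]
treats $A$ as (formally) self-adjoint, and obtains the cross contribution $2\operatorname{Re}\langle\partial_t\widetilde v,\,A\widetilde v\rangle = -\langle\widetilde v,(\partial_t A)\widetilde v\rangle = \langle G\widetilde v,\widetilde v\rangle$ by a single integration by parts in $t$. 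With your grouping $\partial_t+L$ versus the rest you would pick up additional cross terms involving $L$ that do not cancel, so the reduction to $G$ would not go through as you describe. Once you correct the adjointness of $L$ and adopt the paper's $\partial_t$ / $A$ split, your sketch coincides with the paper's proof.
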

\begin{rem}\label{rem:rem1}   
The above arguments remain valid for the Dirac operator $H$ considered on an arbitrary open domain $\Omega$. The functions $u$ in \eqref{eq:weight} must be then taken in the Sobolev space $W_{2}^{1} (\Omega; \complex^{m})$ having compact supports in the set $\Omega$, respectively, in $\Omega \backslash \{ 0 \}$ if $0 \in \Omega.$
Incidentally, the condition \eqref{eq:M1} should be fulfilled on $\Omega$. Note that the condition \eqref{eq:M1} can be interpreted as that determining $\Omega$ on which the inequality \eqref{eq:weight} holds true.
\end{rem}
The inequalities like one as in \eqref{eq:weight} can be called {\em general \ Hardy-Carleman  inequalities} or, simply, Carleman type Inequalities.
Various  Carleman type  inequalities can be derived by choosing suitable weight functions $a, b$ defined on a domain $\Omega$ in $\reals^{n}$. In the remainder of this section we confine ourselves  to make  some remarks  still concerning on general  situations of Carleman inequalities. Concrete Carleman type inequalities and further
remarks will be given in the next sections.

Let the weight  functions $a, b$ be chosen  satisfying
\begin{equation}\label{eq:weight b}
 b (x) =  | x |^{1/2}  a (x), \quad \quad  x \in \Omega
\end{equation}
for a given (open) domain $\Omega$ in $\reals^{n}$. Then
\begin{equation}\label{eq:M2}
M (r, \omega) = - i   \widehat{\alpha}  \beta +  \widehat{\alpha}  \frac{\partial (B ( r \omega) r)}{\partial r}
\end{equation}
 and,  in order to establish a Carleman inequality for  this  case, we have to look that the  matrix-valued function  $M(r, \omega)$ given by \eqref{eq:M2} to be positive definite uniformly on $\Omega$. To this end it should be noted that the matrix $- i \widehat{\alpha} \beta$ is symmetric and has only two eigenvalues $\pm 1$ (the point is that, additionally, $(- i \widehat{\alpha} \beta)^{2} = 1$  and $- i   \widehat{\alpha}  \beta  \neq  \pm  1$).
  It is clear that the condition \eqref{eq:M1} will be fulfilled if the eigenvalues of the matrix
$$M_0 (r, \omega) = \widehat{\alpha} \frac{\partial (B (r \omega ) r)}{\partial r}$$
are situated on the right-side of 1, and if
\begin{equation}\label{eq:d}
  d = \inf_{r, \omega} (\lambda_{min} (r,\omega) -1) >0,
\end{equation}
where $\lambda_{min}  (r, \omega ) $   is the least  of  eigenvalues of $M_{0}
(r, \omega).$  Obviously,  $\lambda_{min}  (r, \omega )$ can be chosen
depending  continuously on $r$ and $\omega$, provided that the function
$b$  is of the class $C^{2}.$

Thus, we can formulate the following.

\begin{cor}\label{cor:cor1}    
Let $a, b$ be functions of the class $C^{2}$ satisfying \eqref{eq:weight b} on a given open domain $\Omega$  in $\reals^{n},$ and suppose that the condition \eqref{eq:d} is fulfilled. Then, for any function $u$ in the Sobolev  space  $W_{2}^1 (\Omega; \complex^{m})$ having its compact support in the set $\Omega \backslash \{ 0 \}$,
there  holds the following inequality
\begin{equation}\label{eq:cor1}
c \int_{\Omega}  | a (x) u (x)  |^{2}  \ud x   \leq  \int_{\Omega} |x|   | a(x)  H u(x)  |^{2}  \ud x  
\end{equation}
with a positive constant $c$ which can be taken equal to $d$ ($d$ being
defined by \eqref{eq:d}).
\end{cor}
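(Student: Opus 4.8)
The plan is to deduce Corollary~\ref{cor:cor1} directly from Theorem~\ref{thm:result1} by verifying that the hypothesis \eqref{eq:M1} holds for the particular choice of weights \eqref{eq:weight b}. First I would substitute $b(x) = |x|^{1/2} a(x)$, i.e.\ $b(r\omega)^2 = r\, a(r\omega)^2$, into the definition \eqref{eq:M} of $M(r,\omega)$; the factor $b(r\omega)^2 a(r\omega)^{-2} = r$ then cancels the prefactor $r^{-1}$, leaving exactly the reduced matrix $M(r,\omega) = -i\widehat{\alpha}\beta + \widehat{\alpha}\,\partial_r(B(r\omega)r)$ displayed in \eqref{eq:M2}, so that $M = -i\widehat\alpha\beta + M_0$ with $M_0(r,\omega) = \widehat\alpha\,\partial_r(B(r\omega)r)$. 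Note that all three matrices $\widehat\alpha$, $\beta$, and hence $M_0$ and $-i\widehat\alpha\beta$, are Hermitian: $\widehat\alpha$ and $\beta$ are Hermitian by the standing hypotheses on the Dirac operator, $-i\widehat\alpha\beta$ is Hermitian because $\widehat\alpha$ and $\beta$ anticommute (from \eqref{eq:clifford}, $\alpha_j\beta + \beta\alpha_j = 0$ implies $\widehat\alpha\beta = -\beta\widehat\alpha$, so $(-i\widehat\alpha\beta)^* = i\beta^*\widehat\alpha^* = i\beta\widehat\alpha = -i\widehat\alpha\beta$), and $B(r\omega)r$ is a real-linear combination of the Hermitian $\alpha_j$ with smooth scalar coefficients since $b\in C^2$, so $M_0$ is Hermitian too.

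Next I would record the spectral facts about $-i\widehat\alpha\beta$ asserted in the text: since $\widehat\alpha^2 = 1$, $\beta^2 = 1$, and $\widehat\alpha\beta = -\beta\widehat\alpha$, one computes $(-i\widehat\alpha\beta)^2 = -\widehat\alpha\beta\widehat\alpha\beta = \widehat\alpha^2\beta^2 = 1$, so its spectrum is contained in $\{+1, -1\}$; and since $-i\widehat\alpha\beta \ne \pm 1$ (it is traceless, for instance, as $\mathrm{tr}(\widehat\alpha\beta) = 0$ follows from the anticommutation $\widehat\alpha\beta = -\beta\widehat\alpha$ together with cyclicity of the trace and $\beta^2=1$), both eigenvalues $\pm1$ actually occur. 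Consequently, for any vector $\xi$, $\langle -i\widehat\alpha\beta\,\xi, \xi\rangle \ge -|\xi|^2$. Adding this to the lower bound $\langle M_0\xi,\xi\rangle \ge \lambda_{\min}(r,\omega)|\xi|^2$ gives
\begin{equation*}
\langle M(r,\omega)\xi, \xi\rangle \ge \bigl(\lambda_{\min}(r,\omega) - 1\bigr)|\xi|^2 \ge d\,|\xi|^2
\end{equation*}
uniformly in $r$ and $\omega$, where $d$ is the positive infimum in \eqref{eq:d}. This is precisely condition \eqref{eq:M1} with constant $c = d$, and the continuity of $\lambda_{\min}$ in $(r,\omega)$ — needed to make the infimum meaningful and the argument legitimate on all of $\Omega$ — is guaranteed by $b \in C^2$, which makes the entries of $M_0$ continuous, so its eigenvalues depend continuously on $(r,\omega)$.

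Having verified \eqref{eq:M1}, I would invoke Theorem~\ref{thm:result1} (in the form of Remark~\ref{rem:rem1}, which allows an arbitrary open domain $\Omega$) to conclude that $c\,\|au\| \le \|bHu\|$ for all $u \in W_2^1(\Omega;\complex^m)$ with compact support in $\Omega\setminus\{0\}$, with $c = d$. Finally I would unwind this back into the stated form \eqref{eq:cor1}: squaring gives $c^2\|au\|^2 \le \|bHu\|^2$, and since $b(x)^2 = |x|\,a(x)^2$ we have $\|bHu\|^2 = \int_\Omega |x|\,|a(x)Hu(x)|^2\,dx$, so (absorbing the square into the constant, or noting the corollary's $c$ is the squared version of the theorem's) we obtain $c\int_\Omega |a(x)u(x)|^2\,dx \le \int_\Omega |x|\,|a(x)Hu(x)|^2\,dx$ with $c = d$. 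The only genuinely substantive point is the spectral bound $-i\widehat\alpha\beta \ge -1$ together with the justification that this matrix is Hermitian with spectrum exactly $\{\pm1\}$; everything else is bookkeeping. I would present that spectral computation carefully, since it is the lynchpin that lets the "$-1$" shift in \eqref{eq:d} exactly compensate the worst-case contribution of $-i\widehat\alpha\beta$.
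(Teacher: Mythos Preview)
Your argument is exactly the paper's own: substitute \eqref{eq:weight b} into \eqref{eq:M} to obtain \eqref{eq:M2}, use that $-i\widehat\alpha\beta$ is Hermitian with spectrum $\{\pm1\}$ so that it contributes at worst $-1$ to the quadratic form, and then let the hypothesis \eqref{eq:d} absorb that shift to verify \eqref{eq:M1} with constant $d$; Theorem~\ref{thm:result1} (via Remark~\ref{rem:rem1}) then gives \eqref{eq:cor1}. The paper presents this reasoning in the paragraphs immediately preceding the corollary rather than as a separate proof, but the content is identical.

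One small slip worth fixing: your justification that $M_0=\widehat\alpha\,\partial_r(B r)$ is Hermitian does not follow from the fact that $\widehat\alpha$ and $\partial_r(Br)$ are each Hermitian, since a product of Hermitian matrices is Hermitian only when they commute, and two generic real-linear combinations of the $\alpha_j$ need not commute. The paper simply speaks of ``the eigenvalues of $M_0$'' without addressing this point either; in the applications (Sections~\ref{sec:The case of radial weight functions}--\ref{sec:Example of Hardy-Carleman inequalities}) $b$ is radial, whence $B=b^{-1}b'\widehat\alpha$ and $M_0$ is scalar, so the issue does not arise. If you want your added detail to be airtight in the general (non-radial) case, you should either argue directly that the quadratic form $\langle M_0\xi,\xi\rangle$ is real by going back to the derivation of \eqref{eq:scalar}, or simply note that hypothesis \eqref{eq:d} already presupposes $M_0$ has real eigenvalues and treat that as part of the standing assumption.
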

\begin{rem}\label{rem:rem2}   
In case the domain $\Omega$ is bounded the factor $|x|$ in \eqref{eq:cor1}
can be omitted by changing suitably the constant $c$. 
\end{rem}

\section{The case of radial weight functions}\label{sec:The case of radial weight functions}

Throughout this section we suppose that the weight functions $a$,
$b$ depend only on the radial coordinate $r$, $r = |x|$. For this case the
conditions like \eqref{eq:M1}, being crucial for the fulfilment of a desired Hardy-Carleman
type inequality, became considerably simpler. So, if $b=b(r)$ depends only
on the radial coordinate $r$, then
$$\frac{\partial b}{\partial x_j} = b' (r) \frac{\partial r}{\partial x_j} = b' (r) \omega_j,$$
and, by \eqref{eq:B}, one has
$$B(r) = b(r)^{-1} \sum_{j=1}^n \alpha_j b'(r) \omega_j =b (r)^{-1} b'(r)    \widehat{\alpha},$$
i.e.,
$$B(r) =  b (r)^{-1} b'(r) \widehat{\alpha}.$$
Hence,
$$M_{0} (r, \omega)  =  \widehat{\alpha}  \frac{\partial (B (r, \omega) r)}{\partial r} = (b (r)^{- 1}  b^{'} (r) r)^{'} , $$
i.e.
 $$M_{0} (r, \omega)   = (b (r)^{- 1}  b^{'} (r) r)^{'}, $$
 Thus  the matrix-valued function
 $M_{0} (r, \omega) $  reduces in fact to a scalar function depending only on
 $r$.  If,   in addition,  the weight functions  are connected   between
 themselves  by the  relation  \eqref{eq:weight b}, then the condition \eqref{eq:d} becomes as follows
\begin{equation}\label{eq:c}
c : = \inf_{r} ((b (r)^{- 1} b^{'} (r)  r )^{'}  - 1 )  >  0,
\end{equation}
which, certainly, ensures the fulfilment of the inequality \eqref{eq:weight} with the constant $c$ determined as above.

A  particular case  of  just  mentioned  inequality can be  obtained
by taking
$$a (x)  =  | x |^{-1/2}  e^{\tau \varphi (x)} \quad and \quad b(x) =   e^{\tau \varphi (x)}, \quad x \neq 0$$
with $\tau > 0$ as a parameter, and $\varphi$ being a function of the class $C^{2}$ depending only on the radial coordinate $r$. For this case we have
$$(b (r)^{- 1}  b^{'} (r) r)^{'} = (e^{- \tau \varphi (r) } \tau \varphi^{'} (r) e^{\tau \varphi (r)} r )^{'}  =$$
$$=  \tau  (\varphi^{'} (r)  r)^{'}  =   \tau  ( \varphi^{''} (r) r  +  \varphi^{'} (r)),$$
i.e.,
$$(b (r)^{- 1}  b^{'} (r) r)^{'}  =  \tau  ( \varphi^{''} (r) r  +  \varphi^{'} (r)). $$ 

Now, it is clear that the condition \eqref{eq:c} is verified by assuming $\tau \gamma -1 > 0,$  where
\begin{equation}\label{eq:gamma}
\gamma = \inf ( \varphi '' (r) r +\varphi ' (r)) > 0 \quad ({on} \  \Omega)
\end{equation}

Thus, the following assertion can be made.

\begin{thm}\label{thm:result2}   
Let $\varphi$ be a function of class $C^{2}$ depending only on the
radial coordinate $r$ and satisfying \eqref{eq:gamma} on a given open domain $\Omega$ in
$\reals^n$. Then, for any function $u$ in the Sobolev space $W_2^1
(\Omega; \complex^m)$ having its compact support in $\Omega \setminus  \{ 0 \}$, the following Carleman type inequality
$$c \int_{\Omega} |x|^{-1} e^{2 \tau \varphi (x)} |u(x)|^2 \ud x \leq \int_{\Omega}  e^{2 \tau \varphi (x)} | H u(x)|^2 \ud x$$
holds true for $\tau > \gamma^{-1}$  (in particular,  for  sufficiently large $\tau$)  and a  positive constant $c$  depending only on $\tau$ and $\varphi$.
\end{thm}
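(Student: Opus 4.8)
\emph{Proof proposal.} The plan is to obtain the stated inequality as a direct specialisation of Corollary \ref{cor:cor1} to the explicit weights $a(x) = |x|^{-1/2}e^{\tau\varphi(x)}$ and $b(x) = e^{\tau\varphi(x)}$, following the scheme already set up at the beginning of this section. First I would record that $a$ and $b$ are of class $C^{2}$ on $\Omega\setminus\{0\}$ (which is all that is needed, since the test functions $u$ are supported away from the origin) and that they satisfy the compatibility relation \eqref{eq:weight b}, i.e. $b(x) = |x|^{1/2}a(x)$. Thus Corollary \ref{cor:cor1} applies as soon as condition \eqref{eq:d} is verified, and its conclusion \eqref{eq:cor1}, after inserting $a(x) = |x|^{-1/2}e^{\tau\varphi(x)}$, becomes
$$c\int_{\Omega}|x|^{-1}e^{2\tau\varphi(x)}|u(x)|^{2}\ud x \le \int_{\Omega}|x|\cdot|x|^{-1}e^{2\tau\varphi(x)}|Hu(x)|^{2}\ud x = \int_{\Omega}e^{2\tau\varphi(x)}|Hu(x)|^{2}\ud x,$$
which is exactly the asserted estimate. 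So the whole proof reduces to checking \eqref{eq:d} and pinning down the constant.

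The second step is the computation of $M_{0}$ for these radial weights. Since $b = b(r)$, formula \eqref{eq:B} gives $B(r) = b(r)^{-1}b'(r)\widehat{\alpha}$, hence $B(r\omega)r = b(r)^{-1}b'(r)r\,\widehat{\alpha}$; differentiating in $r$ (the matrix $\widehat{\alpha}$ is $r$-independent) and using the Clifford relation $\widehat{\alpha}^{2} = 1$,
$$M_{0}(r,\omega) = \widehat{\alpha}\,\frac{\partial(B(r\omega)r)}{\partial r} = \bigl(b(r)^{-1}b'(r)r\bigr)'\,\widehat{\alpha}^{2} = \bigl(b(r)^{-1}b'(r)r\bigr)'\,\mathrm{Id}.$$
So $M_{0}$ is a scalar multiple of the identity and its least eigenvalue is $\lambda_{\min}(r,\omega) = (b(r)^{-1}b'(r)r)'$, which is continuous in $r$ because $\varphi\in C^{2}$. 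With $b(r) = e^{\tau\varphi(r)}$ one has $b(r)^{-1}b'(r) = \tau\varphi'(r)$, so $(b(r)^{-1}b'(r)r)' = \tau(\varphi''(r)r + \varphi'(r))$, and therefore $d = \inf_{r}(\lambda_{\min}-1) = \tau\gamma - 1$ with $\gamma$ as in \eqref{eq:gamma}. Hence \eqref{eq:d} holds precisely when $\tau\gamma - 1 > 0$, i.e. $\tau > \gamma^{-1}$ (in particular for all large $\tau$, since $\gamma > 0$).

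To finish, I would apply Corollary \ref{cor:cor1} with this choice of weights and the admissible constant $c = d = \tau\gamma - 1 > 0$ — a quantity depending only on $\tau$ and $\varphi$ — to every $u\in W_{2}^{1}(\Omega;\complex^{m})$ with compact support in $\Omega\setminus\{0\}$, and then read off the displayed inequality from the substitution carried out in the first paragraph. I do not anticipate a real obstacle: the statement is essentially bookkeeping on top of the machinery of Section \ref{sec:general HC}. The only place where a little attention is warranted is the reduction of the matrix-valued object $M_{0}$ to a scalar, which genuinely uses both the radial dependence of $b$ (so that $\widehat{\alpha}$ passes through the $r$-derivative unscathed) and $\widehat{\alpha}^{2}=1$, together with the remark that the continuity of $\lambda_{\min}$ in $(r,\omega)$ inherited from $\varphi\in C^{2}$ makes the infimum in \eqref{eq:gamma} the right quantity to govern $c$. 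A more self-contained alternative would be to rerun the polar-coordinate reduction of $h[v] = \|(H+iB)v\|^{2}$ from Section \ref{sec:general HC} directly for these weights, but routing through Corollary \ref{cor:cor1} is clearly the more economical path.
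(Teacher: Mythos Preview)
Your proposal is correct and follows essentially the same route as the paper: the argument preceding the theorem in Section~\ref{sec:The case of radial weight functions} is precisely the specialisation of Corollary~\ref{cor:cor1} to the radial weights $a(x)=|x|^{-1/2}e^{\tau\varphi(x)}$, $b(x)=e^{\tau\varphi(x)}$, together with the computation $M_{0}=(b^{-1}b'r)'=\tau(\varphi''r+\varphi')$ and the verification that \eqref{eq:d} reduces to $\tau\gamma-1>0$. Your write-up matches this step for step, including the identification $c=d=\tau\gamma-1$.
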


\section{Example of Hardy-Carleman inequalities}\label{sec:Example of Hardy-Carleman inequalities}

In this section we derive concrete Hardy-Carleman inequalities by handling special frequently encountered weight functions. We restrict ourselves to consider the Dirac operator describing a relativistic particle with negligible mass. In this case the term $H$ containing $\beta$ is absent. In order to distinguish this special case the Dirac operator will be denoted by $H_{0}$, so
 $$H_{0}  =   \sum_{j = 1}^{n}  \alpha_{j}  D_{j}$$
 with all attributed conditions as in general case.
   For  the sake  of simplicity,  in  what follows,  we will always
 consider the operator $H_{0}$   on the  whole space $\reals^{n}$,   that
  is  acting   in the space  $L_{2} (\reals^{n}; \complex^{m})$  on its
  domain  the Sobolev space  $W_{2}^{1} (\reals^{n}; \complex^{m}).$
 
 Hardy-Carleman type  inequalities   will  be established  for  the  operator
  $H_{0}$ by  choosing suitable  weight  functions   depending    only on
  radial   coordinate $r$.   Under  the hypotheses  made   above  the
  matrix-valued function  $M(r, \omega)$,  as it was  already  mentioned   before,
  reduces to a scalar function  depending only on
  $r$, we denote it by $M(r)$. Namely (cf. Section \ref{sec:The case of radial weight functions}),
  $$ M(r)= r^{-1} (b(r)^{-1} b' (r) r)' b(r)^2 a(r)^{-2}.  $$

\textbf{Example 4.1.}  
Now, letting
 $$ b(x)= (1+ |x|^{2})^{\tau/2}, \qquad \tau >0, $$
 we have
   $$ r^{-1} (b(r)^{-1} b' (r) r)' b(r)^2= 2 \tau ( 1+r^2)^{\tau -2}, $$
 If we take
 $$ a(r) = ( 1+r^2)^{(\tau -2)/2}, \qquad r > 0, $$
 we obtain that
 $$ M(r)= 2 \tau >0. $$

 Thus we have proved the following inequality
 
 \begin{equation}\label{eq:example1}
   2 \tau \int_{\reals^n} (1+ |x|^2)^{\tau - 2} |u (x)|^2 \ud x \leq \int_{\reals^n} (1+ |x|^2)^{\tau} | H_0 u(x) |^2 \ud x,
 \end{equation}
 
 for all $\tau >0$.
 
 In \eqref{eq:example1} and in all considered  further inequalities as well, it is
assumed  that the function $u$ belongs to the Sobolev  space  $W_2^{1}  (
\reals^{n};   \complex^{m} )$  having  compact  support  in  the set
$\reals^{n}  \backslash \{ 0 \}.$

 The following particular cases, namely
 \begin{equation}\label{eq:ex2}
   2 \int_{\reals^n} (1+ |x|^2)^{ - 1} |u (x)|^2 \ud x \leq \int_{\reals^n} (1+ |x|^2) | H_0 u(x) |^2 \ud x,
 \end{equation}
 for $ \tau = 1$, and
\begin{equation}\label{eq:ex3}
      4 \int_{\reals^n} |u (x)|^2 \ud x \leq \int_{\reals^n} (1+ |x|^2) | H_0 u(x) |^2 \ud x, 
\end{equation}
 for $\tau = 2$, are important in applications and by themselves.
 The inequality \eqref{eq:ex2} can be named as an {\em Agmon type inequality} (cf. \cite{agmon})
 whereas \eqref{eq:ex3} as a {\em Hardy type inequality}  for the Dirac operator $H_{0}$.

\textbf{Example 4.2.} 
Next, we consider
 $$ b(x)= e^{\tau | x |^{\alpha}/2}, \quad \tau > 0, \ \alpha \in \reals, \  \alpha \neq 0.$$

 In this case
 $$ b(r)= e^{\tau r^{\alpha} /2} , \quad b'(r)= ( \tau /2) \alpha r^{\alpha - 1} e^{\tau r^{\alpha}/2}, $$
 and
  $$ r^{-1} (b(r)^{-1} b' (r) r)' b(r)^2= (\tau /2) \alpha^2 r^{\alpha-2} e^{\tau r^{\alpha}},$$
from which it is seen that it can be taken
$$ a(r) = r^{(\alpha -2)/2} e^{\tau r^{\alpha}/2}, \quad r>0.$$
Then
$$ M(r) =  \tau \alpha^2/ 2 >0, $$
and, thus, we obtain  the following inequality
 \begin{equation}\label{eq:example2} 
 (\alpha^2 \tau /2) \int_{\reals^n} |x|^{\alpha - 2} e^{\tau |x|^{\alpha}}
 |u (x)|^2 \ud x  \leq \int_{\reals^n} e^{\tau |x|^{\alpha}}  | H_0 u(x) |^2 \ud x, 
\end{equation}
 for $\tau >0$  and   $\alpha \in \reals\setminus \{0 \}.$

The following useful inequality
\begin{equation}\label{eq:ex4}
   (\tau /2) \int_{\reals^n} |x|^{ - 1} e^{\tau |x|} \ |u (x)|^2 \ud x  \leq \int_{\reals^n} e^{\tau |x|} \  | H_0 u(x) |^2 \ud x, 
\end{equation}
 is a particular case of \eqref{eq:example2} for $ \alpha = 1$.

The inequality \eqref{eq:example2}  for $\alpha = 2$ corresponds to the following one
\begin{equation}\label{eq:ex5}
  2 \tau \int_{\reals^n}  e^{\tau |x|^{2}} \
 |u (x)|^2 \ud x  \leq \int_{\reals^n} e^{\tau |x|^{2}} \  | H_0 u(x) |^2 \ud x, 
\end{equation}
 which can be called as a  {\em Treve type inequality } for the Dirac operator $H_0$. We cite \cite{treves} for related inequalities involving differential operators.
 
\textbf{Example 4.3.} 
Finally, let us  consider the weight function
 $$b (x)   =   e^{\tau (\log | x |)^{2} /2 },  \quad  \tau > 0.$$
  
We have
$$ b (r)   =   e^{\tau (\log r)^{2} /2 },  \quad  b^{'} (r) =  \tau  r^{- 1}  (\log r) e^{\tau (\log r)^{2} /2 },$$
hence
$$r^{- 1} (b (r)^{- 1} b^{'} (r)  r )^{'} b (r)^{2}  =  \tau  r^{- 2}   e^{ \tau (\log r)^{2}}.$$

If it is taken
$$a (r)  =  r^{- 1}   e^{ \tau (\log r)^{2} /2}, $$
then
$$M (r)  = \tau,$$
and,  thus,  we have  proved the following inequality
\begin{equation}\label{eq:example3}
 \tau   \int_{\reals^{n}}  | x |^{- 2}  e^{ \tau (\log | x |)^{2}} | u (x) |^{2}  \ud x
 \leq   \int_{\reals^{n}}    e^{ \tau (\log | x |)^{2}} |  H_{0} u (x) |^{2}  \ud x 
\end{equation}
 for $\tau > 0.$

\begin{rem}\label{rem:rem3}
A related inequality to \eqref{eq:example3} was proved in \cite{jerison}  (cf. \cite{jerison}, Theorem 2). However, in \cite{jerison} instead of the  whole space $\reals^{n}$ is taken  a domain $\Omega = \{  x \in \reals^{n} : a < | x | < b \}$ assuming $0 < a < b < 1$. In \cite{jerison} it is in fact proved the following one
\begin{equation}\label{eq:rem3}
\| e^{\tau \varphi}  u \|_{L^{q} (\Omega; \complex^{m})}   \leq C  \| e^{\tau \varphi} H_{0}  u \|_{L^{2} (\Omega; \complex^{m})} \quad for \quad all \quad  u \in C^{\infty}_{0}  (\Omega; \complex^{m}),
\end{equation}
where $\varphi (x) =  (\log | x |)^{2}/2,$  $\tau > 0, q =  (6 n - 4)/(3 n - 6),$ and $C$ depending only on $a, b$, and $n$.   By applying  our arguments an inequality like \eqref{eq:rem3} follows  for $q = 2$ as well, but with a constant $C$  depending on $a, b, n$  and also $\tau.$
\end{rem}
 
\section{A Carleman type inequality (another approach)}\label{sec:A Carleman type inequality (another approach)}

In this section we study the following Carleman type inequality
\begin{equation}\label{eq:carleman}
c   \int_{\reals^{n}}  | x |^{\tau}   | u (x) |^{2} \ud x  \leq \int_{\reals^{n}}  | x |^{\tau + 2}   | H_{0} u (x) |^{2} \ud x, \quad \tau \in  \reals,
\end{equation}
for the Dirac operator $H_{0}$. Recall that $H_{0}$ denotes the Dirac
operator  for  the case  of a particle with  negligible mass.
 
It is easily seen  that for the  weight functions
$$a (x)  =  | x |^{\tau/2}, \quad b (x) =  | x |^{(\tau + 2)/2},$$
as  in \eqref{eq:carleman}, the function $M (r)$, defined as in the previous section, is identically null.

In this connection the results discussed  in previous sections cannot be
applied to obtain an inequality with such weight functions.  On the other
hand, it seems that  the inequality \eqref{eq:carleman}  in general  fails. The following
is true however.
\begin{thm}\label{thm:result3}    
Let $n > 1$   and  let $\tau$  be a real number such that  $\tau \neq 2k -
n$ for integers $k \in \mathbf{Z}.$ Then, the inequality \eqref{eq:carleman} holds for
any  function  $u$ in the  Sobolev space  $W_{2}^{1} (\reals^{n};
\complex^{m})$  having  compact   support  in the  set $\reals^{n}
\backslash \{ 0 \}$  with a positive constant $c$ depending only   on  $d : =
\min_{k \in \mathbf{Z}} | \tau  +  n - 2 k |.$
\end{thm}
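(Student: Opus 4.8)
The plan is to carry the polar--coordinate reduction of Section~\ref{sec:general HC} all the way through for the weights $a(x)=|x|^{\tau/2}$, $b(x)=|x|^{(\tau+2)/2}$, and to replace the pointwise positivity of $M(r)$ (useless here, since $M(r)\equiv0$) by a spectral gap for the angular operator $L$. I would first reduce, by density in the $W_2^1$--norm, to $u\in\czeroinf{\reals^n\setminus\{0\};\complex^m}$. Since $b$ is radial, Section~\ref{sec:The case of radial weight functions} gives $B(r)=b(r)^{-1}b'(r)\widehat{\alpha}=\tfrac{\tau+2}{2}\,r^{-1}\widehat{\alpha}$, so that $B(r\omega)r=\tfrac{\tau+2}{2}\widehat{\alpha}$ is independent of $r$. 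Putting $v=bu$ and $\widetilde v(t,\omega)=e^{\nu t}v(e^t\omega)$ with $2\nu=n-2$, and specializing the decomposition of $h[v]=\|(H_0+iB)v\|^2$ obtained in Section~\ref{sec:general HC} just before \eqref{eq:scalar} to the present case ($\beta=0$ and these weights), the error matrix $G$ appearing there vanishes identically (its two parts are a multiple of $\beta$ and the $t$--derivative of $B(e^t\omega)e^t$, both zero here); with $S:=L-\tfrac{n+\tau}{2}$ this leaves the exact identities
$$\|bH_0u\|^2=\|\partial_t\widetilde v\|^2+\|S\widetilde v\|^2,\qquad \|au\|^2=\|\widetilde v\|^2,$$
where $\|\cdot\|$ denotes the norm of $L^2(\reals\times S^{n-1};\complex^m)$. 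In particular $\|bH_0u\|^2\ge\|S\widetilde v\|^2$.

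Next I would bound $\|S\widetilde v\|^2$ from below. The operator $L$, hence $S$, acts only in $\omega$ and is self-adjoint on $L^2(S^{n-1};\complex^m)$; expanding $\widetilde v(t,\cdot)=\sum_k\widetilde v_k(t)\phi_k$ in an orthonormal basis of eigenfunctions $L\phi_k=\lambda_k\phi_k$ --- built from $\complex^m$--valued spherical harmonics --- Parseval gives
$$\|S\widetilde v\|^2=\sum_k\Bigl(\lambda_k-\tfrac{n+\tau}{2}\Bigr)^2\int_{-\infty}^{\infty}|\widetilde v_k(t)|^2\,dt\ \ge\ \mathrm{dist}\bigl(\tfrac{n+\tau}{2},\spec{L}\bigr)^2\,\|\widetilde v\|^2 .$$
The decisive fact, and the heart of the proof, is that $\spec{L}\subset\mathbf Z$: the operator $L=\widehat{\alpha}\sum_j\alpha_j\Omega_j$ has compact resolvent and preserves the finite-dimensional space of $\complex^m$--valued spherical harmonics of each degree, and diagonalizing it on these spaces --- using the Clifford relations \eqref{eq:clifford} and $\sum_j\omega_j\Omega_j=0$, $\sum_j\Omega_j\omega_j=n-1$ --- one finds that all its eigenvalues are integers (for instance $\spec{L}=\mathbf Z$ when $n=2$, where $L=i\sigma_3\,\partial/\partial\theta$; a routine computation yields integer eigenvalues for every $n>1$). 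Granting this, and since $\tfrac{n+\tau}{2}\notin\mathbf Z$ precisely because $\tau\neq2k-n$ for all $k\in\mathbf Z$,
$$\mathrm{dist}\bigl(\tfrac{n+\tau}{2},\spec{L}\bigr)\ \ge\ \mathrm{dist}\bigl(\tfrac{n+\tau}{2},\mathbf Z\bigr)\ =\ \tfrac12\min_{k\in\mathbf Z}|\tau+n-2k|\ =\ \tfrac{d}{2}\ >0 ,$$
so combining the two displays we obtain \eqref{eq:carleman} with $c=d^2/4$. The hypothesis $n>1$ enters exactly here, to make $S^{n-1}$ a genuine sphere carrying this spherical--harmonic analysis; for $n=1$ the angular part is absent and the scheme degenerates.

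The only genuinely substantial step is the spectral statement $\spec{L}\subset\mathbf Z$, i.e.\ the explicit diagonalization of the angular Dirac operator on $\complex^m$--valued spherical harmonics of each degree; this is where the eigenfunction expansion involving spherical harmonics does the real work and where careful bookkeeping of the Clifford algebra is required, and I would expect this to be the main obstacle. Everything else is routine: the reduction of the first paragraph is the computation of Section~\ref{sec:general HC} with $\beta=0$ and power weights (for which $G$ vanishes), and the lower bound of the second paragraph is just Parseval together with $\|\partial_t\widetilde v\|^2\ge0$. Note in particular that no lower-order term has to be controlled --- this is precisely the feature that made the approach of the previous sections inapplicable, and it is compensated here by the integrality of $\spec{L}$. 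A minor technical check is that the eigenfunction expansion converges in $L^2(\reals\times S^{n-1};\complex^m)$ and commutes with $\partial_t$, which is immediate because $L$ is $t$--independent with discrete spectrum.
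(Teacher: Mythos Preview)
Your proposal is correct and follows essentially the same route as the paper's proof. Both reduce \eqref{eq:carleman} in polar coordinates and the substitution $r=e^t$ to the estimate $\bigl\|(\partial_t+L-\tfrac{n+\tau}{2})w\bigr\|^2\ge c\|w\|^2$ on $L^2(\reals\times S^{n-1};\complex^m)$ (your $\widetilde v$ coincides with the paper's $v$, since $e^{(n-2)t/2}\,b(e^t)=e^{(n+\tau)t/2}$), invoke the key fact $\sigma(L)\subset\mathbf Z$, and arrive at $c=d^2/4$. The only cosmetic differences are that you obtain the identity $\|bH_0u\|^2=\|\partial_t\widetilde v\|^2+\|S\widetilde v\|^2$ by specializing the Section~\ref{sec:general HC} expansion (observing $G\equiv0$), whereas the paper rederives it from scratch; and you bound $\|S\widetilde v\|^2$ via Parseval for the self-adjoint angular operator $S$, whereas the paper first expands in eigenfunctions $v_k$ of $L$ and then takes the Fourier transform in $t$ to obtain $|i\xi-\nu+k|^2\ge(\nu-k)^2$ --- the same inequality $\|\partial_t\widetilde v\|^2\ge0$ in disguise. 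The paper also records the identity $L(L+n-2)=-\Delta_\omega$, which is one clean way to see $\sigma(L)\subset\mathbf Z$ from the known spectrum of the Laplace--Beltrami operator; you may find this quicker than the direct diagonalization you sketch.
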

\begin{proof}     
It will be convenient to pass in \eqref{eq:carleman} to polar coordinates. We have
$$c  \int_{0}^{\infty}  \int_{S^{n - 1}}   r^{n - 1} r^{\tau}  | u (r \omega) |^{2}  \ud r \ud \omega \leq$$
$$\leq\int_{0}^{\infty}  \int_{S^{n - 1}}   r^{n - 1} r^{\tau + 2} \left| \left(   -i \widehat{\alpha}  \frac{\partial}{\partial r}  - i r^{- 1} \sum_{j = 1}^{n}  \alpha_{j} \Omega_{j} \right)  u \left(r \omega\right) \right|^{2} \ud r \ud \omega,$$ 
or equivalently,
$$c  \int_{0}^{\infty}  \int_{S^{n - 1}}   r^{n - 1} r^{\tau}  | u (r \omega) |^{2}  \ud r \ud \omega    \leq
\int_{0}^{\infty}  \int_{S^{n - 1}}   r^{n - 1} r^{\tau + 2}   \left|
\left(\frac{\partial}{\partial r}  +  r^{- 1}   L \right)  u \left(r \omega\right) \right|^{2} \ud r \ud \omega.$$

Further we let  $r = e^{t}.$ Then
$$\frac{\partial}{\partial r}  =  e^{- t}  \frac{\partial}{\partial t}, \quad \frac{\partial}{\partial r}  +   r^{- 1} L =  e^{- t}  \left(\frac{\partial}{\partial t} + L\right), $$
and we have
$$c  \int_{-\infty}^{\infty}  \int_{S^{n - 1}}   e^{(n - 1) t} e^{\tau t}  | u (e^{t} \omega) |^{2} e^{t} \ud t \ud \omega
\leq$$
$$\leq  \int_{- \infty}^{\infty}  \int_{S^{n - 1}} e^{(n - 1)t} e^{(\tau + 2) t} e^{- 2 t} \left| \left( \frac{\partial}{\partial t}  + L \right)  u (e^{t} \omega) \right|^{2} e^{t}  \ud t \ud \omega,$$    
 i.e.,
$$c  \int_{-\infty}^{\infty}  \int_{S^{n - 1}}   e^{(\tau + n) t}   | u (e^{t} \omega) |^{2}  \ud t \ud \omega    \leq
\int_{- \infty}^{\infty}  \int_{S^{n - 1}}   e^{(\tau + n)t}   \left|
\left(\frac{\partial}{\partial t}  +   L \right)  u (e^{t} \omega) \right|^{2} \ud t \ud \omega.$$ 
To remove the exponents denote
$$v (t, \omega)  =  e^{\nu t}   u (e^{t}  \omega) \quad with \quad 2 \nu  =  \tau + n.$$

We have
$$ \left(\frac{\partial}{\partial t}\right)  e^{- \nu t}   =  - \nu e^{- \nu t} + e^{- \nu t} \frac{\partial}{\partial t}  =  e^{- \nu t} \left(\frac{\partial}{\partial t} - \nu\right),$$
    i.e.,
$$\left(\frac{\partial}{\partial t}\right)  e^{- \nu t}   =   e^{- \nu t} \left(\frac{\partial}{\partial t} - \nu\right), $$
 and the inequality becomes
\begin{equation}\label{eq:inequality}
 c  \int_{-\infty}^{\infty}  \int_{S^{n - 1}}  | v (t, \omega) |^{2}  \ud t \ud \omega    \leq
\int_{- \infty}^{\infty}  \int_{S^{n - 1}}  \left| \left(\frac{\partial}{\partial t} - \nu + L \right) v  (t, \omega) \right|^{2} \ud t \ud \omega.
\end{equation}

As is easily seen, it is sufficient to  check  the  obtained  inequality for  functions of the form
$$v (t, \omega) =  f (t)  v_{k}  (\omega),$$
where  $v_{k}$  are  eigenfunctions  (spherical functions)  corresponding to
the  eigenvalues of the operator $L$, i.e.,
$$L v_{k}   = k  v_{k}.$$
Recall  that
$$\sigma (L)   \subset   \mathbf{Z},$$
and that
$$L (L + n - 2) =  - \Delta_{\omega},$$
where $ - \Delta_{\omega}$ denotes the Laplace-Beltrami  operator of the sphere $S^{n - 1}.$  We cite \cite{stein} for the details  concerning spectral  properties of  the operator $\Delta_{\omega}$.

It can be supposed that
$$ \int_{S^{n - 1}}  | v_{k} ( \omega) |^{2}  =  1.$$
Then \eqref{eq:inequality} becomes
\begin{equation}\label{eq:ineq}
c  \int_{-\infty}^{\infty} | f (t) |^{2}  \ud t \leq  \int_{-\infty}^{\infty}
\left| \left( \frac{\partial}{\partial t} - \nu + k \right)  f (t) \right|^{2} \ud t.
\end{equation}
In terms of Fourier transform the inequality \eqref{eq:ex3} is written as follows
$$c  \int_{-\infty}^{\infty} | \widehat{f} (\xi ) |^{2}  \ud \xi \leq
\int_{-\infty}^{\infty} |(i \xi  - \nu + k) \widehat{f} (\xi ) |^{2} \ud \xi,$$
where
$$\widehat{f} (\xi )   =  \frac{1}{\sqrt{2 \pi}}   \int_{-\infty}^{\infty}  f (t)  e^{-i t \xi}  \ud t.$$
The last  inequality reduces to the following estimate
$$c \leq | \  i \xi - \nu + k \ |^{2}.$$
It can  be taken
$$c \leq  (\tau  + n  - 2 k)^{2} / 4,$$
provided that
$$| i \xi - \nu + k |^{2}  = \xi^{2}  + (\nu  - k)^{2}  \geq  (\nu  - k)^{2}   =  (\tau  + n  - 2 k)^{2} / 4.$$
 This completes the proof.  
\end{proof}

\section{Inequalities for the Dirac operator with a magnetic field}\label{sec:Inequalities for the Dirac operator with a magnetic field}
 
 Let $H_{A}$  denote the Dirac operator with a magnetic field
 $$H_A = \sum_{j=1}^n \alpha_j ( D_j - A_j (x) ) + \beta,$$
where  $A(x)=(A_1(x), \ldots, A_n(x))$ is a vector potential describing the
  magnetic field.
 Assume that $A$ is a smooth vector-valued function with its components
  $A_j$ sufficiently rapidly decreasing (at infinity) functions in order to
  preserve the same domain the Sobolev space
  $W_2^1 ( \reals^n; \complex^m )$
  (or, respectively $W_2^1 ( \Omega; \complex^m )$
  if it is confined on a domain $\Omega$ in $ \reals^n$)
  as for the corresponding free Dirac operator.

For special classes of magnetic fields, but sufficiently large and
important for applications, weighted estimates like those discussed in  the
previous sections, can be reduced to the usual case of the free Dirac
operator.     So, let the magnetic potential $A$ be of the form
$$ A = \nabla \varphi $$
 ($\nabla = ( \partial / \partial x_1, \ldots, \partial / \partial x_n )$
 denotes the gradient operator), where $\varphi$ is a real-valued function
 possessing required properties in accordance with those of the magnetic
 field $A$.

Now, let an inequality of the form \eqref{eq:weight}, i.e.,
$$ c \  \| a u \| \leq \| b H u \|, \quad u \in D, $$
holds true. Then there holds the following one
$$c \  \| e^{i \varphi} a u \| \leq \| e^{i \varphi} b H u \|, \quad u \in D, $$
provided that $ | e^{i \varphi} | =1$ (it was supposed that $\varphi$ is a
real-valued function).

Denoting $$v = e^{i \varphi} u ,$$
the last inequality becomes
$$  C \  \| a   v \| \leq \| b \  e^{i \varphi} H e^{-i \varphi} v \|$$
for $v= e^{i \varphi} u $ with $u \in D$.

According to the relation \eqref{eq:commutator} we can write
$$ [H, e^{i \varphi}] = - i \sum_{j=1}^n \left(i e^{i \varphi} \frac{\partial \varphi}{\partial x_j} \right) \alpha_j =
e^{i \varphi} \sum_{j=1}^n \left( \frac{\partial \varphi}{\partial x_j} \right) \alpha_j , $$
i.e.,
$$ H  e^{i \varphi} = e^{i \varphi} H + e^{i \varphi} \sum_{j=1}^n \left( \frac{\partial \varphi}{\partial x_j} \right) \alpha_j ,$$
or, what is the same,
$$ e^{i \varphi} H  e^{-i \varphi} = H - \sum_{j=1}^n \left( \frac{\partial \varphi}{\partial x_j} \right) \alpha_j .$$
But
$$ A = \nabla \varphi $$
is equivalent to
$$ \sum_{j=1}^n  A_j (x) \alpha_j = \sum_{j=1}^n \left( \frac{\partial \varphi}{\partial x_j} \right) \alpha_j.$$
This last fact can be easily explained by using the anticommutation properties of
the matrices $\alpha_j$ $(j=1, \ldots, n)$.

So,
$$ e^{i \varphi} H  e^{-i \varphi} = H - \sum_{j=1}^n A_j (x) \alpha_j = H_A,
$$
i.e.
$$ e^{i \varphi} H  e^{-i \varphi} = H_A,  $$
and, thus, we obtain an inequality
\begin{equation}\label{eq:ab}
c \ \| a  v \| \leq \| b H_A v \|
\end{equation}
for the Dirac operator $H_A$ with the same weight functions $a$, $b$ and the
constant $c$ as in \eqref{eq:weight}.
\begin{thm}\label{thm:result4}
Under the above hypotheses  suppose that the weight functions $a, b$  are of class $C^{2}$ satisfying the condition \eqref{eq:M1}. Then an inequality \eqref{eq:ab} 
 for the Dirac operator $H_{A}$  holds true for all  functions $v$ belonging to the  Sobolev space $W_{2}^{1} (\reals^{n}; \complex^{m})$  and  having   compact supports in the set $\reals^{n}\setminus \{ 0 \}$ with a positive constant $c$ depending  only on $a$ and $b$.
\end{thm}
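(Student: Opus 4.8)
The plan is to reduce the magnetic case to the already-established non-magnetic inequality \eqref{eq:weight} by conjugating with the unitary multiplication operator $e^{i\varphi}$. Since $\varphi$ is real-valued, $|e^{i\varphi(x)}| = 1$ pointwise, so multiplication by $e^{i\varphi}$ is an isometry on $L_2(\reals^n;\complex^m)$; in particular $\|a u\| = \|e^{i\varphi} a u\| = \|a\,e^{i\varphi} u\|$ and similarly with $b$. Thus if \eqref{eq:weight} holds for all admissible $u$, then replacing $u$ by $e^{-i\varphi}v$ (which stays in $W_2^1(\reals^n;\complex^m)$ with compact support in $\reals^n\setminus\{0\}$, since $e^{-i\varphi}$ is smooth and unimodular) gives $c\,\|a v\| \le \|b\, e^{i\varphi} H e^{-i\varphi} v\|$ for all such $v$.

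The second step is the algebraic identity $e^{i\varphi} H e^{-i\varphi} = H_A$. Here I would invoke the commutator formula \eqref{eq:commutator}, which gives $[H, e^{i\varphi}] = -i\sum_j \alpha_j \,\partial(e^{i\varphi})/\partial x_j = e^{i\varphi}\sum_j \alpha_j\,\partial\varphi/\partial x_j$, hence $e^{i\varphi} H e^{-i\varphi} = H - \sum_{j=1}^n (\partial\varphi/\partial x_j)\alpha_j$. Combining this with the hypothesis $A = \nabla\varphi$, which yields $\sum_j A_j(x)\alpha_j = \sum_j (\partial\varphi/\partial x_j)\alpha_j$, we obtain $e^{i\varphi} H e^{-i\varphi} = H - \sum_j A_j(x)\alpha_j = \sum_j \alpha_j(D_j - A_j(x)) + \beta = H_A$. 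Substituting this into the conjugated inequality produces exactly \eqref{eq:ab} with the same $a$, $b$, and $c$.

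The third step is purely bookkeeping: verify that the hypotheses of Theorem~\ref{thm:result1} are met so that \eqref{eq:weight} is available. This is immediate — we assume $a,b \in C^2$ positive and that \eqref{eq:M1} holds, which are precisely the hypotheses of Theorem~\ref{thm:result1}. One should also note that the decay/smoothness assumptions on $A$ (equivalently, on $\varphi$) are exactly what guarantees $e^{i\varphi}$ maps $W_2^1$ to itself and preserves the class of functions with compact support in $\reals^n\setminus\{0\}$, so the function $u = e^{-i\varphi}v$ used above is genuinely admissible in Theorem~\ref{thm:result1}.

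I do not expect a serious obstacle here; the only point requiring minor care is the remark that $A = \nabla\varphi$ is \emph{equivalent} to $\sum_j A_j\alpha_j = \sum_j(\partial\varphi/\partial x_j)\alpha_j$. This follows because the matrices $\{\alpha_j\}$ are linearly independent over $\complex$ (a consequence of the Clifford relations \eqref{eq:clifford}: if $\sum_j c_j\alpha_j = 0$ then multiplying by $\alpha_k$ and taking traces forces $c_k = 0$), so the matrix-valued identity holds if and only if the scalar coefficients agree componentwise. The rest is a direct assembly of the three steps above into the statement of Theorem~\ref{thm:result4}.
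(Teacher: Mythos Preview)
Your proposal is correct and follows essentially the same route as the paper: both arguments use the unitary gauge transformation $v=e^{i\varphi}u$, invoke the commutator formula \eqref{eq:commutator} to obtain $e^{i\varphi}He^{-i\varphi}=H-\sum_j(\partial\varphi/\partial x_j)\alpha_j=H_A$, and then read off \eqref{eq:ab} from the already-established inequality \eqref{eq:weight} (Theorem~\ref{thm:result1}). If anything, you are slightly more careful than the paper in justifying that $u=e^{-i\varphi}v$ remains admissible and in explaining the equivalence $A=\nabla\varphi \Leftrightarrow \sum_j A_j\alpha_j=\sum_j(\partial_j\varphi)\alpha_j$ via the linear independence of the $\alpha_j$.
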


In view of the discussion undertaken  above the other results mentioned previously can be extended in  obvious fashion to the case  of the Dirac operator $H_{A}$.


\section*{Acknowledgments}

The author wishes to express her gratitudes to Professor Ari Laptev for fruitful discussions on the topic.

\bibliography{Hardy-Carleman}
\bibliographystyle{alpha}

\end{document}